\newtheorem{theorem}{Theorem}
\newtheorem{proposition}[theorem]{Proposition}
\theoremstyle{remark}
\newtheorem{remark}[theorem]{Remark}
\newtheorem{example}[theorem]{Example}
\newtheorem*{acknowledgements}{Acknowledgements}
\newcommand{\bA}{\mathbf{A}}
\newcommand{\bN}{\mathbf{N}}
\newcommand{\bP}{\mathbf{P}}
\newcommand{\bR}{\mathbf{R}}
\newcommand{\cI}{\mathscr{I}}
\newcommand{\cO}{\mathscr{O}}
\newcommand{\clD}{\mathcal{D}}
\newcommand{\fa}{\mathfrak{a}}
\newcommand{\fb}{\mathfrak{b}}
\newcommand{\fm}{\mathfrak{m}}
\DeclareMathOperator{\codim}{codim}
\DeclareMathOperator{\mld}{mld}
\DeclareMathOperator{\ord}{ord}
\DeclareMathOperator{\Proj}{Proj}
\DeclareMathOperator{\Spec}{Spec}
\DeclareMathOperator{\wt}{wt}
\begin{document}
\title[Divisors computing the minimal log discrepancy]{Divisors computing the minimal log discrepancy on a smooth surface}

\author{Masayuki Kawakita}
\address{Research Institute for Mathematical Sciences, Kyoto University, Kyoto 606-8502, Japan}
\email{masayuki@kurims.kyoto-u.ac.jp}
\thanks{Partially supported by JSPS Grants-in-Aid for Young Scientists (A) 24684003 and for Scientific Research (C) 16K05099.}

\begin{abstract}
We study a divisor computing the minimal log discrepancy on a smooth surface. Such a divisor is obtained by a weighted blow-up. There exists an example of a pair such that any divisor computing the minimal log discrepancy computes no log canonical thresholds.
\end{abstract}

\maketitle

Let $P\in X$ be the germ of a smooth variety and $\fa$ be an $\bR$-ideal on $X$. The minimal log discrepancy $\mld_P(X,\fa)$ is an important invariant of singularities in view of Shokurov's reduction of the termination of flips \cite{S04}, but we do not have a good understanding of it. Presumably, one of its reasons is that it is unclear which divisor $E$ (over $X$) computes the minimal log discrepancy. That is, $E$ has centre $P$ and the log discrepancy of $E$ equals $\mld_P(X,\fa)$ or is negative. The purpose of this note is to study such $E$ when $X$ is a surface. We provide the description of $E$ in terms of a weighted blow-up.

\begin{theorem}\label{thm:surface}
Let $P\in X$ be the germ of a smooth surface and $\fa$ be an $\bR$-ideal on $X$.
\begin{enumerate}
\item\label{itm:surf-lc}
If $(X,\fa)$ is log canonical, then every divisor computing $\mld_P(X,\fa)$ is obtained by a weighted blow-up.
\item\label{itm:surf-nonlc}
If $(X,\fa)$ is not log canonical, then some divisor computing $\mld_P(X,
\linebreak
\fa)$ is obtained by a weighted blow-up.
\end{enumerate}
\end{theorem}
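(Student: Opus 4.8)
The plan is to reduce the problem to a comparison between an arbitrary divisor over $P$ and a monomial (toric) valuation, exploiting the fact that the divisors obtained by a weighted blow-up are precisely the monomial valuations. Fix a regular system of parameters $(x,y)$ at $P$; a divisor $E$ over $P$ is obtained by a weighted blow-up in these coordinates exactly when $\ord_E$ is the monomial valuation $\ord_E(\sum c_{ij}x^iy^j)=\min\{\,\ord_E(x)\,i+\ord_E(y)\,j : c_{ij}\neq0\,\}$, equivalently when $E$ is a log canonical place of the simple normal crossing pair $(X,\{x=0\}+\{y=0\})$, equivalently when $a_E(X)=\ord_E(x)+\ord_E(y)$, where $a_E$ denotes the log discrepancy. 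The first step is therefore to record the inequality $a_E(X)\ge \ord_E(x)+\ord_E(y)$, valid for every $E$ and every choice of $(x,y)$ because $(X,\{x=0\}+\{y=0\})$ is log canonical, together with the fact that equality characterizes weighted blow-ups.

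Next I would attach to a given $E$ the monomial valuation $m$ with $m(x)=\ord_E(x)$ and $m(y)=\ord_E(y)$. Since $m\le\ord_E$ pointwise one has $m(\fa)\le\ord_E(\fa)$ and $a_m(X)=\ord_E(x)+\ord_E(y)\le a_E(X)$. Writing $w$ for the primitive weighted blow-up underlying $m$ (so $m=d\cdot\ord_w$ with $d=\gcd(\ord_E(x),\ord_E(y))$ and $a_m(X,\fa)=d\,a_w(X,\fa)$), the whole theorem would follow from the key inequality
\[
a_E(X,\fa)\ \ge\ a_m(X,\fa),
\]
with equality only when $E$ is itself monomial. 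Indeed, in the non log canonical case I apply this to some $E$ with $a_E(X,\fa)<0$ and read off $a_w(X,\fa)<0$, producing a weighted blow-up that computes $\mld_P(X,\fa)$, which is \eqref{itm:surf-nonlc}; in the log canonical case I apply it to a divisor $E$ computing $\mld_P(X,\fa)$, and since $a_w(X,\fa)\ge\mld_P(X,\fa)\ge0$ and $d\ge1$ the chain $\mld_P(X,\fa)=a_E(X,\fa)\ge a_m(X,\fa)=d\,a_w(X,\fa)\ge a_w(X,\fa)\ge\mld_P(X,\fa)$ collapses to equalities; the equality $a_E(X,\fa)=a_m(X,\fa)$ then forces $E$ to be monomial, which is \eqref{itm:surf-lc}.

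The key inequality rewrites as $a_E(X)-\ord_E(x)-\ord_E(y)\ge \ord_E(\fa)-m(\fa)$, i.e.\ the log canonical defect of the snc boundary must dominate the loss $\ord_E(\fa)-m(\fa)$ incurred by passing from $\ord_E$ to its monomialization. To make this work I would not take $(x,y)$ generic but adapted to $E$ and to $\fa$: choose $\{y=0\}$ to be a curve of maximal contact with $E$ (a smooth curve passing through as many of the infinitely near points $P=P_0,P_1,\dots$ of $E$ as possible) and $\{x=0\}$ transverse to it, so that $m$ already records the tangency of $E$ that is relevant to $\fa$. Both $a_E(X)-\ord_E(x)-\ord_E(y)$ and $\ord_E(\fa)-m(\fa)$ can then be expanded through the proximity/multiplicity data of the sequence of point blow-ups realizing $E$ and through the Newton polygon of $\fa$ in these coordinates, reducing the inequality to a comparison of sums over the $P_i$.

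I expect this last step to be the main obstacle, and the difficulty is genuine. In generic coordinates the inequality is false: for the cuspidal example $\fa=(y^2-x^3)^s$ one already sees $a_E(X)-\ord_E(x)-\ord_E(y)<\ord_E(\fa)-m(\fa)$ once the weights are unbalanced, so the argument really hinges on a judicious, $E$- and $\fa$-adapted choice of $(x,y)$ and on showing that the defect $\ord_E(\fa)-m(\fa)$ is controlled by the snc defect in those coordinates. I would also need the strict form of the inequality for non-monomial $E$ in the log canonical range; the point there is that log canonicity bounds the coefficients of $\fa$ below the thresholds at which the tangency of $\fa$ to $E$ could overcome the snc defect, which is exactly what prevents a non-monomial divisor from computing $\mld_P(X,\fa)$.
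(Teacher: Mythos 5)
Your setup---choosing coordinates adapted to $E$, with $\{x=0\}$ of maximal contact and the other weight equal to $\ord_E\fm$, and comparing $E$ with the attached monomial valuation---is exactly the paper's own starting point (the paper takes $w_1=\max_{x_1\in\fm\setminus\fm^2}\ord_Ex_1$ and $w_2=\ord_E\fm$ and compares $E$ with the exceptional divisor $F$ of the $(w_1,w_2)$-weighted blow-up). But the proposal stops where the actual proof has to begin: the ``key inequality'' $a_E(X,\fa)\ge a_m(X,\fa)$, together with its equality case, \emph{is} the theorem, and you leave both unproved, saying yourself that this last step is the main obstacle and that the difficulty is genuine. Note moreover that even granting the inequality, your equality case does not follow formally from the collapse of the chain: equality only gives $a_E(X)-a_m(X)=\ord_E\fa-m(\fa)$, and excluding the possibility that both sides are strictly positive is again precisely the hard content. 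The paper never proves a universal inequality of your type; instead it exploits the hypothesis that $E$ computes $\mld_P(X,\fa)$, which hands it the \emph{reverse} inequality $a_E\le a_F$ for free, and then rules out $E\neq F$ by a concrete argument: the centre of $E$ on the weighted blow-up $Y$ would have to be a free point $Q\in F\setminus(C_1+C_2)$; inversion of adjunction applied to $(Y,F,\fa_Y)$ forces $\ord_Q(\fa_Y\cO_F)\ge1$; a Newton-polygon computation on $F\cong\bP(w'_1,w'_2)$ bounds $\ord_Q(\fa_Y\cO_F)\le1/w'_1+1/w'_2$, whence $w'_2=1$; and then $\ord_E(x_1+\lambda_0x_2^{w'_1})>w_1$ contradicts maximality of $w_1$. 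Some argument of comparable strength, using log canonicity of $(X,\fa)$ in an essential way, is what your sketch of ``proximity data plus Newton polygon'' would have to produce, and the proposal gives no indication of how.

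Separately, your derivation of part (ii) is wrong rather than merely incomplete: you apply the key inequality to a divisor with $a_E(X,\fa)<0$, i.e.\ to a non-lc pair, and in that regime the inequality is false \emph{even in your adapted coordinates}. Take $\fa=(x_1^2+x_2^3)^s$ with $s>1$, and let $E$ be the divisor obtained from the $(3,2)$-weighted blow-up by one further blow-up at the point where the strict transform of the cusp meets the exceptional curve $F$, so $\ord_E(x_1,x_2)=(3,2)$, $\ord_E(x_1^2+x_2^3)=7$, $a_E(X)=6$. The maximal-contact weights of $E$ are exactly $(w_1,w_2)=(3,2)$ with $m=\ord_F$, yet $a_E(X,\fa)=6-7s<5-6s=a_m(X,\fa)$ for all $s>1$. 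So no choice of coordinates rescues the inequality for non-lc pairs, and (ii) cannot be obtained this way. The correct (and easy) route is the paper's scaling trick: choose $t<1$ with $\mld_P(X,\fa^t)=0$, apply part (i) to the lc pair $(X,\fa^t)$, and observe that any divisor computing $\mld_P(X,\fa^t)=0$ has $a_E(X,\fa)=-(1-t)\ord_E\fa<0$, hence computes $\mld_P(X,\fa)=-\infty$. This example also shows that any proof of your key inequality must invoke log canonicity of the pair itself, not just a clever choice of coordinates.
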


The log canonical threshold is another invariant of singularities, roughly corresponding to the minimal log discrepancy divided by the multiplicity. The log canonical threshold is considered to be easier to handle, because the minimal model program extracts a divisor computing minimal log discrepancy zero. For example, the ACC for log canonical thresholds is proved completely by Hacon, McKernan and Xu \cite{HMX14}. Hence in treating positive $\mld_P(X,\fa)$, it is a standard approach to find a suitable $\bR$-ideal $\fb$ such that $\mld_P(X,\fa\fb)$ equals zero. However even in dimension two, one can not expect the existence of the $\fb$ for which some divisor computes both $\mld_P(X,\fa)$ and $\mld_P(X,\fa\fb)$.

\begin{example}\label{exl:lct}
Let $P\in\bA^2=\Spec k[x_1,x_2]$ where $P$ is the origin. Consider the pair $(\bA^2,\fa^{2/3})$ for $\fa=(x_1^2+x_2^3,x_1x_2^2)$. There exists a unique divisor $E$ computing $\mld_P(\bA^2,\fa^{2/3})=2/3$, but $E$ does not compute $\mld_P(\bA^2,\fa^{2/3}\fb)$ for any $\bR$-ideal $\fb$ such that $\mld_P(\bA^2,\fa^{2/3}\fb)=0$.
\end{example}

\begin{remark}
I found Blum \cite{B16} independently proved that every divisor computing $\mld_P(X,\fa)\ge0$ on a smooth surface $X$ computes $\mld_P(X,\fb)=0$ for some $\fb$. This property is different from that discussed in Example \ref{exl:lct}, and follows from Theorem \ref{thm:surface} immediately. Indeed, let $E$ be the divisor obtained by the weighted blow-up of $P\in X$ with $\wt(x_1,x_2)=(w_1,w_2)$. Then $(X,\fb)$ for $\fb=(x_1^{w_2},x_2^{w_1})^{1/w_1+1/w_2}$ is lc since so is $(X,(x_1^{w_2})^{1/w_2}(x_2^{w_1})^{1/w_1})$. $E$ computes $\mld_P(X,\fb)=0$.
\end{remark}

We will fix the terminology before proceeding to the proofs of Theorem \ref{thm:surface} and Example \ref{exl:lct}. We work over an algebraically closed field $k$ of characteristic zero. The germ is considered at a closed point.

An $\bR$-\textit{ideal} on a variety $X$ is a formal product $\fa=\prod_j\fa_j^{r_j}$ of finitely many coherent ideal sheaves $\fa_j$ on $X$ with positive real exponents $r_j$. The \textit{order} of $\fa$ along a closed subvariety $C$ of $X$ is $\ord_C\fa=\sum_jr_j\ord_C\fa_j$, where $\ord_C\fa_j$ is the maximal $\nu\in\bN\cup\{+\infty\}$ satisfying $\fa_{j,\eta}\subset\cI_\eta^\nu$ for the ideal sheaf $\cI$ of $C$ and the generic point $\eta$ of $C$. The \textit{pull-back} of $\fa$ by a morphism $Z\to X$ is $\fa\cO_Z=\prod_j(\fa_j\cO_Z)^{r_j}$. If $Z\to X$ is birational and $Z$ is normal, then we set $\ord_E\fa=\ord_E\fa\cO_Z$ for a prime divisor $E$ on $Z$. If $Z\to X$ is a birational morphism from a smooth variety $Z$ whose exceptional locus is a divisor $\sum_iE_i$, then the \textit{weak transform} on $Z$ of $\fa$ is the $\bR$-ideal $\fa_Z=\prod_j(\fa_{jZ})^{r_j}$ defined by $\fa_{jZ}=\fa_j\cO_Z(\textstyle\sum_i(\ord_{E_i}\fa_j)E_i)$. This is different from the strict transform (see \cite[III Definition 5]{H64}).

A prime divisor $E$ on a normal variety $Y$ equipped with a birational morphism $Y\to X$ is called a divisor \textit{over} $X$, and the closure of the image in $X$ of $E$ is called the \textit{centre} of $E$ on $X$ and denoted by $c_X(E)$. We write $\clD_X$ for the set of all divisors over $X$. Two elements in $\clD_X$ are often identified if they define the same valuation on the function field of $X$.

Suppose $X$ to be smooth. The \textit{log discrepancy} of $E$ with respect to the pair $(X,\fa)$ is
\begin{align*}
a_E(X,\fa)=1+\ord_EK_{Y/X}-\ord_E\fa.
\end{align*}
We say that $(X,\fa)$ is \textit{log canonical} (\textit{lc}) if $a_E(X,\fa)\ge0$ for all $E\in\clD_X$, and is \textit{klt} (resp.\ \textit{plt}) if $a_E(X,\fa)>0$ for all $E\in\clD_X$ (resp.\ all $E\in\clD_X$ exceptional over $X$). The \textit{minimal log discrepancy} of $(X,\fa)$ at a closed point $P$ in $X$ is
\begin{align*}
\mld_P(X,\fa)=\inf\{a_E(X,\fa)\mid E\in\clD_X,\ c_X(E)=P\}.
\end{align*}
The $\mld_P(X,\fa)$ is either a non-negative real number or $-\infty$, and $(X,\fa)$ is lc about $P$ iff $\mld_P(X,\fa)\ge0$. If $E\in\clD_X$ satisfies that $c_X(E)=P$ and that $a_E(X,\fa)=\mld_P(X,\fa)$ (or is negative when $\mld_P(X,\fa)=-\infty$), then we say that $E$ \textit{computes} $\mld_P(X,\fa)$.

Let $P\in X$ be the germ of a smooth variety. Let $x_1,\ldots,x_c$ be a part of a regular system of parameters in $\cO_{X,P}$ and $w_1,\ldots,w_c$ be positive integers. For $w\in\bN$, let $\cI_w$ be the ideal in $\cO_X$ generated by all monomials $x_1^{s_1}\cdots x_c^{s_c}$ such that $\sum_{i=1}^cs_iw_i\ge w$. The \textit{weighted blow-up} of $X$ with $\wt(x_1,\ldots,x_c)=(w_1,\ldots,w_c)$ is $\Proj_X(\bigoplus_{w\in\bN}\cI_w)$. See \cite[6.38]{KSC04} for its explicit description.

\begin{proof}[Proof of Theorem \textup{\ref{thm:surface}}]
If $(X,\fa)$ is not lc, then any divisor computing $\mld_P(X,
\linebreak
\fa^t)=0$ for $t<1$ computes $\mld_P(X,\fa)=-\infty$. Thus (\ref{itm:surf-nonlc}) follows from (\ref{itm:surf-lc}).

We assume that $(X,\fa)$ is lc to show (\ref{itm:surf-lc}). Let $E$ be a divisor over $X$ which computes $\mld_P(X,\fa)$. For the maximal ideal $\fm$ in $\cO_{X,P}$, we set
\begin{align*}
w_1=\max_{x_1\in\fm\setminus\fm^2}\ord_Ex_1,\qquad w_2=\min_{x_2\in\fm\setminus\fm^2}\ord_Ex_2=\ord_E\fm.
\end{align*}
In order to verify the existence of the maximum $w_1$, let $Z\to X$ be a birational morphism from a smooth surface $Z$ on which $E$ appears as a divisor. Applying Zariski's subspace theorem \cite[(10.6)]{A98} to $\cO_{X,P}\subset\cO_{Z,Q}$ for a closed point $Q$ in $E$, one has an integer $w$ such that $\cO_Z(-wE)_Q\cap\cO_{X,P}\subset\fm^2$. Then $\ord_Ex_1<w$ for any $x_1\in\fm\setminus\fm^2$, so $w_1$ exists.

Take a regular system $x_1,x_2$ of parameters in $\cO_{X,P}$ such that $w_i=\ord_Ex_i$ for $i=1,2$. Let $Y$ be the weighted blow-up of $X$ with $\wt(x_1,x_2)=(w_1,w_2)$ and $F$ be its exceptional divisor. We write $w_i=gw'_i$ by the greatest common divisor $g$ of $w_1$ and $w_2$. We write $a_E=a_E(X,\fa)$ and $a_F=a_F(X,\fa)$ for simplicity. Then the inequality
\begin{align}\label{eqn:EF}
a_E\le a_F
\end{align}
holds because $E$ computes $\mld_P(X,\fa)$. It is enough to show that $E=F$.

For $i=1,2$, we let $C_i$ denote the strict transform in $Y$ of the curve defined on $X$ by $(x_i)$. Then one computes that
\begin{align*}
w_i=\ord_Ex_i=\ord_Fx_i\cdot\ord_EF+\ord_EC_i=w'_i\ord_EF+\ord_EC_i.
\end{align*}
The $\ord_EC_i$ is positive iff the centre $c_Y(E)$ lies on $C_i$. Since $C_1$ and $C_2$ are disjoint, at least one of $\ord_EC_1$ and $\ord_EC_2$ is zero. By
\begin{align*}
\frac{w'_1}{w'_2}=\frac{w_1}{w_2}=\frac{w'_1\ord_EF+\ord_EC_1}{w'_2\ord_EF+\ord_EC_2},
\end{align*}
one concludes that $\ord_EC_1=\ord_EC_2=0$. Hence if $E\neq F$, then the centre $c_Y(E)$ must be a closed point $Q$ in $F\setminus(C_1+C_2)$.

Assuming that $c_Y(E)=Q\in F\setminus(C_1+C_2)$, we will derive a contradiction. One has the weak transform $\fa_Y$ of $\fa$ on the germ $Q\in Y$. Then
\begin{align*}
a_E(Y,F,\fa_Y)&=a_E(Y,(1-a_F)F,\fa_Y)-a_F\ord_EF=a_E-a_F\ord_EF\le0
\end{align*}
by (\ref{eqn:EF}), so $(Y,F,\fa_Y)$ is not plt about $Q$. Then $(F,\fa_Y\cO_F)$ is not klt about $Q$ by inversion of adjunction \cite[Sect.\ 4.1]{Ko13}. This means that
\begin{align}\label{eqn:ge1}
\ord_Q(\fa_Y\cO_F)\ge1.
\end{align}

Write $\fa=\prod_j\fa_j^{r_j}$. We take a general member $f_j$ in $\fa_j$ and express
\begin{align*}
f_j=c_jx_1^{s_{1j}}x_2^{s_{2j}}\prod_{\lambda\in k^\times}(x_1^{w'_2}+\lambda x_2^{w'_1})^{t_{\lambda j}}+h_j
\end{align*}
with $s_{ij},t_{\lambda j}\in\bN$, $c_j\in k^\times$ and $\ord_Fh_j>\ord_Ff_j$, so that
\begin{align}\label{eqn:ordF}
\ord_F\fa_j=\ord_Ff_j=s_{1j}w'_1+s_{2j}w'_2+\sum_{\lambda\in k^\times}t_{\lambda j}w'_1w'_2.
\end{align}
Then by the identification of $F$ with the weighted projective space $\bP(w'_1,w'_2)$, the general member in $\fa_Y\cO_F$ is
\begin{align*}
\prod_j\Bigl(x_1^{s_{1j}}x_2^{s_{2j}}\prod_{\lambda\in k^\times}(x_1^{w'_2}+\lambda x_2^{w'_1})^{t_{\lambda j}}\Bigr)^{r_j}.
\end{align*}

Let $\lambda_0\in k^\times$ be the unique unit such that $Q$ lies on the strict transform $D$ in $Y$ of the curve defined on $X$ by $(x_1^{w'_2}+\lambda_0x_2^{w'_1})$. Then
\begin{align*}
\ord_Q(\fa_Y\cO_F)&=\ord_Q\prod_j\Bigl(x_1^{s_{1j}}x_2^{s_{2j}}\prod_{\lambda\in k^\times}(x_1^{w'_2}+\lambda x_2^{w'_1})^{t_{\lambda j}}\Bigr)^{r_j}\\
&=\sum_jr_jt_{\lambda_0j}\le\sum_jr_j(w'_1w'_2)^{-1}\ord_F\fa_j=(w'_1w'_2)^{-1}\ord_F\fa,
\end{align*}
where the inequality follows from (\ref{eqn:ordF}). Since $0\le a_F=a_F(X)-\ord_F\fa=w'_1+w'_2-\ord_F\fa$, one has that $\ord_F\fa\le w'_1+w'_2$. Thus,
\begin{align*}
\ord_Q(\fa_Y\cO_F)\le(w'_1w'_2)^{-1}(w'_1+w'_2)=1/w'_1+1/w'_2.
\end{align*}
Combining (\ref{eqn:ge1}) and this, we obtain that $1\le1/w'_1+1/w'_2$, whence $w'_2=1$ by the coprimeness of $w'_1$, $w'_2$. But then
\begin{align*}
\ord_E(x_1+\lambda_0 x_2^{w'_1})=w'_1\ord_EF+\ord_ED>w'_1\ord_EF=w_1,
\end{align*}
which contradicts the definition of $w_1$.
\end{proof}

\begin{remark}
It is proved in \cite[Theorem 6.40]{KSC04} after Var\v{c}enko that if $P\in X$ is the germ of a smooth complex analytic surface and $(X,tC)$ is lc but not klt for a curve $C$ on $X$, then some divisor $E$ satisfying $a_E(X,tC)=0$ is obtained by a weighted blow-up ($E$ may be a curve on $X$).
\end{remark}

\begin{proof}[Proof of Example \textup{\ref{exl:lct}}]
Let $C$ be the curve on $\bA^2$ defined by $(x_1^2+x_2^3)$. Let $X_1$ be the blow-up of $X$ at $P$ and $E_1$ be its exceptional divisor. For $i=2,3,4$, let $X_i$ be the blow-up of $X_{i-1}$ at $E_{i-1}\cap C_{i-1}$ for the strict transform $C_{i-1}$ of $C$, and $E_i$ be its exceptional divisor. Then
\begin{align*}
\fa\cO_{X_4}=\cO_{X_4}(-2E_1-3E_2-6E_3-7E_4)
\end{align*}
by the same notation $E_i$ for its strict transform, and $X_4$ is a log resolution of $(\bA^2,\fa)$. One computes $a_{E_1}(\bA^2,\fa^{2/3})=2/3$, $a_{E_2}(\bA^2,\fa^{2/3})=1$, $a_{E_3}(\bA^2,\fa^{2/3})=1$ and $a_{E_4}(\bA^2,\fa^{2/3})=4/3$. Thus $\mld_P(\bA^2,\fa^{2/3})=2/3$ and it is computed only by $E_1$.

Suppose that $\mld_P(\bA^2,\fa^{2/3}\fb)=0$ for an $\bR$-ideal $\fb=\prod_j\fb_j^{r_j}$ and that it is computed by $E_1$. Then,
\begin{align*}
\ord_{E_1}\fb=a_{E_1}(\bA^2,\fa^{2/3})=2/3.
\end{align*}
On the other hand, since $\fb_j\subset\fm^{\ord_{E_1}\fb_j}$ for the maximal ideal $\fm$ defining $P$, one has that $\ord_{E_3}\fb_j\ge\ord_{E_1}\fb_j\cdot\ord_{E_3}\fm=2\ord_{E_1}\fb_j$, so
\begin{align*}
\ord_{E_3}\fb=\sum_jr_j\ord_{E_3}\fb_j\ge2\sum_jr_j\ord_{E_1}\fb_j=2\ord_{E_1}\fb=4/3.
\end{align*}
But then
\begin{align*}
a_{E_3}(\bA^2,\fa^{2/3}\fb)=a_{E_3}(\bA^2,\fa^{2/3})-\ord_{E_3}\fb\le-1/3,
\end{align*}
which contradicts $\mld_P(\bA^2,\fa^{2/3}\fb)=0$.
\end{proof}

The description in Theorem \ref{thm:surface} holds only in dimension two.

\begin{example}
Let $P\in\bA^3=\Spec k[x_1,x_2,x_3]$ where $P$ is the origin. Consider the pair $(\bA^3,\fa^{4/3})$ for $\fa=(x_1x_2+x_3^2)+(x_1,x_2,x_3)^3$, which has appeared substantially in \cite[Exercise 6.45]{KSC04}. There exists a unique divisor $E$ computing $\mld_P(\bA^3,\fa^{4/3})=0$, but $E$ is not obtained by a weighted blow-up.

Indeed, let $X_1$ be the blow-up of $\bA^3$ at $P$ and $E_1$ be its exceptional divisor. Let $S_1$ be the strict transform in $X_1$ of the surface defined on $\bA^3$ by $(x_1x_2+x_3^2)$. Let $X_2$ be the blow-up of $X_1$ along the curve $S_1\cap E_1$ and $E_2$ be its exceptional divisor. Then
\begin{align*}
\fa\cO_{X_2}=\cO_{X_2}(-2E_1-3E_2)
\end{align*}
by the same notation $E_1$ for its strict transform, and $X_2$ is a log resolution of $(\bA^3,\fa)$. One computes $a_{E_1}(\bA^3,\fa^{4/3})=1/3$ and $a_{E_2}(\bA^3,\fa^{4/3})=0$. Thus $\mld_P(\bA^3,\fa^{4/3})=0$ and it is computed only by $E_2$. However, $E_2$ is not obtained by a weighted blow-up.
\end{example}

We remark a supplement to Theorem \ref{thm:surface} which holds in any dimension.

\begin{proposition}
Let $P\in X$ be the germ of a smooth variety and $\fa$ be an $\bR$-ideal on $X$. Let $E$ be the divisor obtained by the blow-up of $X$ at $P$.
\begin{enumerate}
\item\label{itm:supple-ge}
If $\ord_P\fa\le1$, then $E$ computes $\mld_P(X,\fa)$.
\item\label{itm:supple->}
If $\ord_P\fa<1$, then $E$ is the unique divisor computing $\mld_P(X,\fa)$.
\end{enumerate}
\end{proposition}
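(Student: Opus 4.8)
The plan is to set $n=\dim X$ and first record that the blow-up $E$ satisfies $\ord_EK_{Y/X}=n-1$, $\ord_E\fm=1$ and $\ord_E\fa=\ord_P\fa$, so that $a_E(X,\fa)=n-\ord_P\fa$. Both assertions then reduce to the single inequality $a_{E'}(X,\fa)\ge n-\ord_P\fa$ for every $E'\in\clD_X$ with $c_X(E')=P$, together with its strictness when $E'\neq E$ and $\ord_P\fa<1$. I would prove this by induction on $n$, the case $n=1$ being trivial since then $E$ is the only divisor centred at $P$.

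For the inductive step let $E'\neq E$ and let $Y\to X$ be the blow-up of $P$ with exceptional divisor $E$. Write $\fa_Y$ for the weak transform of $\fa$ and put $m=\ord_{E'}E=\ord_{E'}\fm\ge1$. Since $K_{Y/X}=(n-1)E$ gives $a_{E'}(X)=a_{E'}(Y)+(n-1)m$, while $\fm\cO_Y=\cO_Y(-E)$ gives $\ord_{E'}\fa=m\,\ord_P\fa+\ord_{E'}\fa_Y$, one obtains the key identity
\[
a_{E'}(X,\fa)=a_{E'}(Y,\fa_Y)+m(n-1-\ord_P\fa).
\]
The centre $W=c_Y(E')$ is a proper closed subvariety of $E$, so $\dim W\le n-2$. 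The crucial geometric input is the multiplicity bound $\ord_W\fa_Y\le\ord_P\fa$: choosing for each $j$ a member $f_j\in\fa_j$ of multiplicity $\ord_P\fa_j$ at $P$, the strict transform of $f_j$ has multiplicity at most $\ord_P\fa_j$ at every point of $E$, and since this strict transform lies in $\fa_{jY}$ it bounds the order of the weak transform along $W$.

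It remains to bound $a_{E'}(Y,\fa_Y)$ from below and feed it into the identity. If $\dim W\ge1$, then localising at the generic point of $W$ presents $(Y,\fa_Y)$ as a germ of dimension $\codim_YW=n-\dim W<n$ carrying $E'$ with centre a closed point and with $\ord_W\fa_Y\le\ord_P\fa\le1$, so the induction on $n$ yields $a_{E'}(Y,\fa_Y)\ge(n-\dim W)-\ord_P\fa$; combined with the identity, the desired bound then follows from the numerical inequality $m(n-1-\ord_P\fa)\ge\dim W$, which holds because $m\ge1$, $\ord_P\fa\le1$ and $\dim W\le n-2$. If instead $W$ is a closed point $Q$ the dimension does not drop, and here I would run a secondary induction on the number of blow-ups realising $E'$: the germ $(Y,\fa_Y)$ at $Q$ has $\ord_Q\fa_Y\le\ord_P\fa\le1$ and $E'$ is extracted by fewer blow-ups, giving $a_{E'}(Y,\fa_Y)\ge n-\ord_Q\fa_Y\ge n-\ord_P\fa$, after which $m(n-1-\ord_P\fa)\ge0$ closes the estimate. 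Strictness in (\ref{itm:supple->}) follows because $\ord_P\fa<1$ forces $m(n-1-\ord_P\fa)>\dim W$, a strict gain over the bound used for (\ref{itm:supple-ge}).

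The main obstacle I expect is the bookkeeping around the point-centre case. In dimension $\ge3$ a divisorial valuation need not be extracted by blowing up points alone, so the two inductions must be organised carefully: every positive-dimensional centre is routed to the induction on $n$, while point centres are fed to the induction on the length of a centre-blow-up sequence realising $E'$, and one must check that this recursion terminates. The displayed identity and the multiplicity bound $\ord_W\fa_Y\le\ord_P\fa$ are routine; the delicate points are marshalling them through the correct induction and verifying that $\dim W\le n-2$ exactly controls the defect $m(n-1-\ord_P\fa)$.
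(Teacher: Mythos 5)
Your strategy is genuinely different from the paper's, and most of it is sound. The paper follows the valuation through a \emph{single} finite tower of blow-ups along smooth, possibly positive-dimensional centres (shrinking by open immersions to keep centres smooth), controls $\ord_{C_i}\fa_i$ by Hironaka's lemmas \cite{H64}, and shows via the explicit formula for $a_{E_i}(X,\fa)$ that log discrepancies strictly increase along the tower. You instead blow up only closed points, which lets you replace Hironaka's lemmas by the elementary fact that the multiplicity of a strict transform does not increase under a point blow-up, at the price of a double (lexicographic) induction: on $\dim X$, and on the length of a tower realising $E'$. Your key identity $a_{E'}(X,\fa)=a_{E'}(Y,\fa_Y)+m(n-1-\ord_P\fa)$, the bound $\ord_W\fa_Y\le\ord_P\fa$, and all the numerical estimates check out, including the nice point that strictness in (\ref{itm:supple->}) is extracted purely from the defect term, so that only non-strict inequalities are needed in the inductions (this also lets you handle $\ord_P\fa=1$ directly, where the paper uses a limiting argument to deduce (\ref{itm:supple-ge}) from (\ref{itm:supple->})). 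The well-foundedness of your secondary induction rests on Zariski's theorem that a divisorial valuation is reached by finitely many blow-ups of centres; you flagged this but did not prove it, which I regard as acceptable since the paper relies on exactly the same fact when it asserts the existence of its finite tower.

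The genuine gap is the localisation step. When $W=c_Y(E')$ has positive dimension, you ``localise at the generic point of $W$'' and apply the inductive hypothesis in dimension $n-\dim W$. But $\Spec\cO_{Y,\eta_W}$ is not a germ of a smooth variety at a closed point: its closed point is the non-closed point $\eta_W$ of $Y$, and its residue field $k(W)$ is a function field, not algebraically closed. The statement you are inducting on --- the Proposition, formulated for germs $P\in X$ at closed points over $k$ --- therefore does not apply to the object you feed it. What you actually need is the inequality $a_{E'}(Y,\fa_Y)\ge\codim_YW-\ord_W\fa_Y$, i.e.\ a version of the Proposition for regular local rings essentially of finite type over a field (equivalently, for minimal log discrepancies at scheme points). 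That version is true, and your argument can be rewritten to prove it --- the key identity, the multiplicity bound (the residue field is infinite, so general members exist), and Zariski's finiteness all survive in that generality --- but then the stronger statement must be the object of the induction from the outset; as written, the induction hypothesis and the place where it is invoked do not match. Note that the paper's proof is organised precisely to avoid this: by blowing up the positive-dimensional centres themselves rather than localising at them, it never needs any statement at non-closed points, only Hironaka's order estimates along the tower.
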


\begin{proof}
The $a_E(X,\fa)$ is the limit of $a_E(X,\fa^{1-\epsilon})$ and $\mld_P(X,\fa)$ is that of $\mld_P(X,\fa^{1-\epsilon})$ when $\epsilon$ goes to zero from above. Thus (\ref{itm:supple-ge}) follows from (\ref{itm:supple->}).

Assume $\ord_P\fa<1$ to show (\ref{itm:supple->}). Let $F$ be an arbitrary divisor over $X$ other than $E$ such that $c_X(F)=P$. Setting $(X_0,C_0)=(X,P)$, we build a tower of finitely many birational morphisms
\begin{align*}
X_n\to\cdots\to X_1\to X_0=X
\end{align*}
such that
\begin{enumerate}
\item[(a)]
$X_i\to X_{i-1}$ is the composition of an open immersion $X_i\to Y_i$ and the blow-up $Y_i\to X_{i-1}$ along $C_{i-1}$,
\item[(b)]
$C_i=c_{X_i}(F)$ is smooth and non-empty, and
\item[(c)]
$C_n$ is a divisor ($n$ is taken smallest).
\end{enumerate}
Note that $n\ge2$ since $E\neq F$. Write $\fa_i$ for the weak transform on $X_i$ of $\fa$ and $E_i$ for the exceptional divisor of $X_i\to X_{i-1}$.

Fixing a subvariety $D_i$ of $C_i$ such that $D_i\to C_{i-1}$ is quasi-finite and dominant, by \cite[III Lemmata 7 and 8]{H64} one has that
\begin{align*}
\ord_{C_i}\fa_i\le\ord_{D_i}\fa_i\le\ord_{C_{i-1}}\fa_{i-1}.
\end{align*}
Thus $\ord_{C_i}\fa_i<1$ for any $i$ by our assumption $\ord_{C_0}\fa_0<1$. On the other hand, one computes that
\begin{align*}
a_{E_i}(X,\fa)=\sum_{j\in I_{i-1}}(a_{E_j}(X,\fa)-1)+\codim_{X_{i-1}}C_{i-1}-\ord_{C_{i-1}}\fa_{i-1},
\end{align*}
where $I_{i-1}$ denotes the set of $1\le j\le i-1$ such that $C_{i-1}$ lies on the strict transform of $E_j$. Using $\ord_{C_{i-1}}\fa_{i-1}<1$, one concludes inductively that
\begin{align*}
a_{E_i}(X,\fa)>1,\qquad a_{E_i}(X,\fa)>a_{E_{i-1}}(X,\fa)
\end{align*}
for any $i$. In particular,
\begin{align*}
a_F(X,\fa)=a_{E_n}(X,\fa)>a_{E_1}(X,\fa)=a_E(X,\fa),
\end{align*}
and (\ref{itm:supple->}) follows.
\end{proof}

\begin{acknowledgements}
Part of the research was achieved during my visits at Korea Institute for Advanced Study and at National Taiwan University. I should like to thank Dr.\ Z. Zhu and Professor J. A. Chen for their warm hospitalities. I am also grateful to Professors S. Helmke and J. Koll\'ar for comments.
\end{acknowledgements}

\end{document}